\newtheorem{theorem}{Theorem}
\numberwithin{equation}{section}
\newtheorem{conj}{Conjecture}
\newtheorem{lemma}[conj]{Lemma}
\newtheorem{definition}{Definition}
\providecommand{\customgenericname}{}
\newcommand{\newcustomtheorem}[2]{%
	\newenvironment{#1}[1] 
	{%
		\renewcommand\customgenericname{#2}%
		\renewcommand\theinnercustomgeneric{##1}%
		\innercustomgeneric
	}
	{\endinnercustomgeneric}
}
\theoremstyle{remark}
\def\0{\bm 0}
\def\1{\mathbbm 1}
\def\b1{\bm 1}
\DeclareMathOperator*{\argmin}{arg\,min}
\DeclareMathOperator*{\argmax}{arg\,max}
\title{A scalar matching factor on the Birkhoff polytope characterizing permutation and uniform matrices}
\author{Suvadip Sana\footnote{Email : ss2776@cornell.edu} \\ Department of Statistics and Data Science, Cornell University}
\date{}
\begin{document}
\maketitle
\begin{abstract}
    Birkhoff polytope is the set of all bistochastic matrices (also known as doubly stochastic matrices). Bistochastic matrices form a special class of stochastic matrices where each row and column sums up to one. Permutation matrices and uniform matrices are special extreme cases of bistochastic matrices. In this paper, we define a scalar quantity called the matching factor on the Birkhoff polytope. Given a bistochastic matrix, we define the matching factor by taking the product of the squares of the Euclidean norms of each row and column and show that permutation matrices and uniform matrices maximize and minimize the matching factor, respectively. We also extend this definition of scalar matching factor to a larger class of matrices and show similar maximization and minimization properties.  
\end{abstract}

\section{Introduction}

Birkhoff polytope is defined as the set of all bistochastic matrices. Bistochastic matrices appear in various fields of study, such as the theory of Markov chains \cite{maksimov1970convergence}\cite{mukhamedov2018stable}, the theory of Majorization \cite{cohen1998comparisons}, and several physical problems \cite{brualdi2006combinatorial}. Random bistochastic matrices have also been considered \cite{cappellini2009random}. Additionally, they are important in the theory of optimization and matching \cite{munkres1957algorithms}\cite{linderman2017reparameterizing}. Permutation matrices are a special class of bistochastic matrices, as they form the building blocks of bistochastic matrices according to the celebrated Birkhoff-Von Neumann theorem \cite{birkhoff1946three}. This theorem states that every bistochastic matrix can be written as a convex combination of permutation matrices. In the literature, there are a few characterizations of permutation matrices \cite{cruse1975proof}. Permutation matrices are also useful in describing perfect matching in bipartite graphs. Another important class of bistochastic matrices is uniform matrices, which appear as the limiting distribution of a Markov chain whose transition matrix is a bistochastic matrix. In the Markov chain literature, permutation transition matrices refer to completely deterministic chains, whereas uniform chains refer to uniform randomization over the state space of the Markov chains.
An interesting question to ask is whether there is a simple scalar quantity (which is easy to compute) that characterizes these two extreme cases. So far, to the best of our knowledge, no such characterization has been proposed. There have been a few works on the characterization of stochastic matrices, see e.g., \cite{VONBELOW20091273}. In this paper, we contribute to the literature by defining a single scalar quantity on the Birkhoff polytope  whose maximum and minimum values characterize permutation and uniform matrices, respectively. We organize the paper as follows: in section \ref{sec2}, we give all the definitions and notations to be used in this paper; in section \ref{sec3}, we state and prove the main results; in section \ref{sec4}, we conclude with certain remarks.

\section{Definitions and Notations} \label{sec2}

\textbf{Notations :} $\mathbb{R}, \mathbb{N}$ will denote the set of all real numbers and natural numbers respectively. We will call a square matrix $A$ of dimension $n \times n$ to be a matrix of order $n$. For a matrix $A$, $A_{k.}$ (note the dot after $k$) will denote the $k$th row of the matrix and $A_{.k}$ (note the dot before $k$) will denote the $k$-th column of the matrix. we call a matrix $A$  to be non-negative if all the entries are non-negative. $\|.\|_{2}^{2}$ will denote the squared euclidean norm.

for $n \in \mathbb{N}$, we define six types of matrices of order $n$ below,
\begin{definition}
    We say a non-negative matrix $B$ of order $n$ to be a bistochastic matrix if
    $$ \sum_{i=1}^{n} B_{ki} = \sum_{j=1}^{n} B_{jk} = 1, \ \ k \in \{1,\dots,n\} $$
\end{definition}
Some examples of bistochastic matrices are 
$$\begin{bmatrix}
    1/2 & 1/2 \\
    1/2   & 1/2
\end{bmatrix} \ , \ \begin{bmatrix}
    1/3 & 1/3 & 1/3 \\
    0 & 2/3  & 1/3 \\
    2/3 & 0 & 1/3
\end{bmatrix}$$

We denote the set of all bistochastic matrices of order $n$ as $\mathcal{B}_{n}$ (Birkhoff polytope)
\begin{definition}
    We say a bistochastic matrix $P$ of order $n$ to be a permutation matrix if each row and column has exactly only one-zero entry and that non-zero entry is equal to one.
\end{definition}

Some examples of permutation matrices are 
$$\begin{bmatrix}
    0 & 1 \\
    1   & 0
\end{bmatrix} \ , \ \begin{bmatrix}
    0 & 1 & 0 \\
    0 & 0  & 1 \\
    1 & 0 & 0
\end{bmatrix}$$

We denote the set of all permutation matrices of order $n$ by $\mathcal{P}_{n}$

\begin{definition}
    We say a bistochastic matrix $U$ of order $n$ to be a uniform matrix if
    $$U_{ij} = \frac{1}{n} ,\ \ \ i,j \in \{1,\dots,n\}$$
\end{definition}

Some examples of uniform matrices are 
$$\begin{bmatrix}
    1/2 & 1/2 \\
    1/2   & 1/2
\end{bmatrix} \ , \ \begin{bmatrix}
    1/3 & 1/3 & 1/3 \\
    1/3 & 1/3  & 1/3 \\
    1/3 & 1/3 & 1/3
\end{bmatrix}$$

Clearly, there is only one uniform matrix of order $n$, we denote that uniform matrix by $U_n$.

Next we state three more definitions that generalizes above three standard definitions

\begin{definition}
    We say a non-negative matrix $A$ to be a $*$-positive matrix if each row and column contain at least one positive entry
\end{definition}

Note that any bistochastic matrix is a $*$- positive matrix and some other examples of $*$-positive matrix are 
$$\begin{bmatrix}
    1/2 & 0 \\
    1/2   & 1/2
\end{bmatrix} \ , \ \begin{bmatrix}
    1/3 & 0 & 0 \\
    1/3 &  7/8 & 0 \\
    1/2 & 0 & 1/3
\end{bmatrix}$$

We denote the set of all $*$-positive matrices of order $n$ as $\mathcal{B}^{*}_{n}$

\begin{definition}
    We say a non-negative matrix $A$ to be a $*$-permutation matrix if each row and column contain exactly one positive entry
\end{definition}

Note that any permutation matrix is a $*$- permutation matrix some other examples of $*$-permutation matrix are 
$$\begin{bmatrix}
    1/2 & 0 \\
    0  & 1/32
\end{bmatrix} \ , \ \begin{bmatrix}
    1/7 & 0 & 0 \\
     0 &  0 & 5 \\
    0 & 1/5 & 0
\end{bmatrix}$$

We denote the set of all $*$-permutation matrices of order $n$ as $\mathcal{P}^{*}_{n}$
\begin{definition}
    We say any $*$- positive matrix $A$ to be a $*$-uniform matrix if all the entries are equal.
\end{definition}

Note that any uniform matrix is a $*$- uniform matrix, some other examples of $*$-uniform matrix are 
$$\begin{bmatrix}
    1/6 & 1/6  \\
    1/6  & 1/6
\end{bmatrix} \ , \ \begin{bmatrix}
    1 & 1 & 1 \\
     1 &  1 & 1 \\
    1 & 1 & 1
\end{bmatrix}$$

We denote the set of all $*$-positive matrices of order $n$ as $\mathcal{U}^{*}_{n}$

\section{Main result} \label{sec3}

Before we state and prove our main characterization theorem, we first state an important observation to be used in the proof.

\begin{lemma}\label{lemma}
    if $a_1,\dots,a_n$ are $n$ non-negative real numbers, then only one of them is equal to one and rest are zero if and only if 
    
    \begin{equation}\label{0.1}
        \sum_{k=1}^{n} a_{k}^2 = (\sum_{k=1}^{n} a_k) ^ 2  = 1
    \end{equation}
\end{lemma}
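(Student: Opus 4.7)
The plan is to handle the two directions separately. The forward direction is trivial: if exactly one $a_i$ equals $1$ and the rest vanish, then both $\sum_{k=1}^n a_k$ and $\sum_{k=1}^n a_k^2$ equal $1$, so \eqref{0.1} holds immediately.

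The substantive direction is the converse. Assume \eqref{0.1}. My plan is to expand the square of the sum as
\[
\Bigl(\sum_{k=1}^n a_k\Bigr)^2 \;=\; \sum_{k=1}^n a_k^2 \;+\; 2\sum_{1\le i<j\le n} a_i a_j,
\]
and then substitute the two given identities from \eqref{0.1}. This forces the cross term to vanish, i.e.\ $\sum_{i<j} a_i a_j = 0$. Here is where non-negativity enters: every summand $a_i a_j$ is non-negative, so a sum of non-negative terms being zero forces $a_i a_j = 0$ for all $i\neq j$. Hence at most one index $k$ can have $a_k>0$.

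Combining this with the constraint $\sum_{k=1}^n a_k = 1$ (which follows from \eqref{0.1} together with non-negativity, since $\sum_{k} a_k \ge 0$ and its square is $1$), exactly one $a_k$ must be nonzero, and that value must equal $1$. This completes the characterization.

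The only subtle point, which should really be stated explicitly in the write-up, is that $(\sum_k a_k)^2 = 1$ together with $a_k \ge 0$ implies $\sum_k a_k = +1$ and not $-1$; otherwise, one could not conclude the surviving entry is exactly $1$ rather than $-1$. No deeper obstacle is expected — the proof is essentially a one-line expansion plus the sign observation.
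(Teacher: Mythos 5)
Your proposal is correct and follows essentially the same route as the paper: expand the square, deduce $\sum_{i<j} a_i a_j = 0$, use non-negativity to conclude at most one entry is nonzero, and then pin its value to $1$ (the paper uses $\sum_k a_k^2 = 1$ for this last step where you use $\sum_k a_k = 1$, an immaterial difference). Your explicit remark about the sign of $\sum_k a_k$ is a welcome touch of care but does not change the argument.
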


\begin{proof} Note that \ref{0.1} implies after expanding the square 
$$\sum_{i < j} a_i a_j = 0$$

And the above holds if and only if at most one of them is non-zero, but we also have that $\sum_{k=1}^{n} a_{k}^2 = 1$, and all $a_1,\dots,a_n$ are non-negative, hence exactly one of them is equal to one and rest are zero. Other direction is trivial.
\end{proof}

Next we define some quantities for stating our main result. For a bistochastic matrix $B$ of order $n$  let $B_{1.}, \dots B_{n.}$ denote the rows of $B$ and let $B_{.1}, \dots B_{.n}$ denote the columns of $B$. For each $k$-th row and $k$-column, we define \textit{$k$-th Matching factor} of $B$ as follows

$$\lambda(k) = \|B_{k.}\|_{2}^{2} \|B_{.k}\|_{2}^{2} \ \ \ \ k = 1, \dots, n$$ 

Intuitively if $\lambda(k)$ is near one, then one of the entry is nearly one in each row and column and rest are near zero, where as if $\lambda(k)$ in small, then many entries can be non-zero. The reason behind this intuition is the geometry of the $l_2$ ball and $l_1$ ball in $\mathbb{R}^n$, see \cite{tibshirani1996regression}. we gave the name Matching factor because the corresponding row and column matches to exactly one non-zero entry when $\lambda$ is equal to one.

For a bistochastic matrix $B$, we denote the \textit{Matching factor} of $B$ as $M(B)$ and  define it as follows

$$M(B) = \prod_{k=1}^{n} \lambda(k)$$

Intuitively the quantity $M(B)$ captures the entire matching factor of the matrix by combined contribution of the matching factor of each row and column. Next we state our characterization theorem.

\begin{theorem}\label{characterization theorem}
 for any bistochastic matrix $B$ of order $n$, the matching factor $M(B)$ satisfies the inequalities

 $$\frac{1}{n^{2n}} \leq M(B) \leq 1$$ 

 and the lower bound and upper bounds are achieved 
 \begin{equation}
     \max_{B \in \mathcal{B}_n} M(B) = 1  \ \ \ \argmax M(B) = \mathcal{P}_n
 \end{equation} 
  and 
  \begin{equation}
     \min_{B \in \mathcal{B}_n} M(B) = \frac{1}{n^{2n}}  \ \ \ \argmin M(B) = \{\mathcal{U}_n\}
 \end{equation} 
\end{theorem}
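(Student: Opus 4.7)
The plan is to bound each factor $\lambda(k)$ individually between $\tfrac{1}{n^2}$ and $1$, multiply the bounds, and then extract the equality cases. The key input on each row or column of a bistochastic matrix $B$ is the pair of constraints $\sum_i B_{ki} = 1$ and $B_{ki}\ge 0$, which force
\[
\tfrac{1}{n} \;\le\; \|B_{k.}\|_2^2 \;\le\; 1,
\]
and similarly for $\|B_{.k}\|_2^2$. The upper bound follows from $\sum_i a_i^2 \le (\sum_i a_i)^2$ when $a_i\ge 0$, and the lower bound from Cauchy--Schwarz: $(\sum_i a_i)^2 \le n \sum_i a_i^2$. Multiplying these gives $\tfrac{1}{n^2} \le \lambda(k) \le 1$ for every $k$, and taking the product over $k=1,\dots,n$ immediately yields the two-sided bound $\tfrac{1}{n^{2n}} \le M(B) \le 1$.

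For the upper characterization, I would argue that $M(B)=1$ forces $\lambda(k)=1$ for every $k$, which in turn forces $\|B_{k.}\|_2^2 = \|B_{.k}\|_2^2 = 1$. Combined with $\sum_i B_{ki}=\sum_j B_{jk}=1$ and non-negativity, Lemma \ref{lemma} applies to each row and each column, so each row and column has a single entry equal to $1$ and all others zero. Hence $B\in\mathcal{P}_n$, and conversely any $P\in\mathcal{P}_n$ clearly achieves $M(P)=1$.

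For the lower characterization, $M(B)=\tfrac{1}{n^{2n}}$ forces $\lambda(k)=\tfrac{1}{n^2}$ for every $k$. Since both $\|B_{k.}\|_2^2$ and $\|B_{.k}\|_2^2$ are individually at least $\tfrac{1}{n}$, the product being $\tfrac{1}{n^2}$ forces each factor to equal $\tfrac{1}{n}$. This is the Cauchy--Schwarz equality case for a non-negative row summing to $1$, so all entries in row $k$ equal $\tfrac{1}{n}$; the same holds column-wise. Thus $B=U_n$, and the converse is an immediate computation.

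There is no real obstacle in this proof: the whole argument reduces to applying standard Cauchy--Schwarz type inequalities row-by-row and column-by-column, and then invoking Lemma \ref{lemma} for the upper equality case and Cauchy--Schwarz equality for the lower case. The only small subtlety to write carefully is the step where $\lambda(k) = \tfrac{1}{n^2}$ must be seen to force \emph{both} $\|B_{k.}\|_2^2 = \tfrac{1}{n}$ and $\|B_{.k}\|_2^2=\tfrac{1}{n}$, rather than one being larger and the other smaller; this is where one uses the fact that each factor already has its own lower bound of $\tfrac{1}{n}$.
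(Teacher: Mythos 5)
Your proposal is correct and follows essentially the same route as the paper: bound each $\lambda(k)$ in $[\tfrac{1}{n^2},1]$ via $\sum_i a_i^2 \le (\sum_i a_i)^2$ for non-negative entries and the Cauchy--Schwarz inequality, take the product, and then extract the equality cases using Lemma \ref{lemma} for the maximum and the Cauchy--Schwarz equality condition for the minimum. Your explicit remark that $\lambda(k)=\tfrac{1}{n^2}$ forces both $\|B_{k.}\|_2^2$ and $\|B_{.k}\|_2^2$ to equal $\tfrac{1}{n}$ (since each is separately bounded below by $\tfrac{1}{n}$) is a small point the paper leaves implicit, but the argument is the same.
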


\begin{proof}

First we prove 3.2. Note that if $B$ is permutation matrix, then by definition each row and column of $B$ will have exactly non-zero entry which is one and rest zero, hence we will get that

$$\|B_{.k}\|_{2}^2 = \|B_{k.}\|_{2}^2 =  1\ \ \ \forall \ \ k \in \{1,\dots,n\}$$.

Hence we get $M(B) = 1$.

Now to prove the other direction. suppose we have $M(B) = 1$.

By now non-negativity of each entries of $B$, and by the sum constraints we must have

\begin{align}
\|B_{.k}\|_{2}^2 = \sum_{i=1}^{n} B_{ik}^2  & \leq (\sum_{i=1}^{n} B_{ik})^2 = 1 \ \ \ \forall \ \ k \in \{1,\dots,n\} \\
\|B_{k.}\|_{2}^2 = \sum_{i=1}^{n} B_{ki}^2  & \leq (\sum_{i=1}^{n} B_{ki})^2 = 1 \ \ \ \forall \ \ k \in \{1,\dots,n\} \\
\end{align} 

Hence for general bistochastic matrix we must have that $\lambda(k) \leq 1 \ \forall \ k  \in  \{1,\dots,n\} $, which implies $M(B) \leq 1$. 

Now from our assumption we have $M(B) = 1$. Hence we must have $\lambda(k) = 1 \ \forall \ k \in \ \ \{1,\dots,n\}$, which in turn implies that all inequalites in 3.4 and 3.5 must be equalities. Hence now by using lemma \ref{lemma}, we get each row and each column of $B$ must have exactly non-zero entry equal to one and rest zero which in turn implies that $B$ must be a permutation matrix, hence we are done. 

Now we prove 3.3. The proof follows from the well known Cauchy-Schwarz inequality. 

Note that by Cauchy-Schwarz inequality

\begin{align}
  \|B_{.k}\|^2  =  \sum_{i=1}^{n} B_{ik}^2 & \geq \frac{1}{n} (\sum_{i=1}^{n} B_{ik} )^2 = \frac{1}{n} \ \ \forall \ k  \in \ \ \{1,\dots,n\} \\ 
\|B_{k.}\|^2 =  \sum_{j=1}^{n} B_{kj}^2 & \geq \frac{1}{n} (\sum_{j=1}^{n} B_{jk} )^2 = \frac{1}{n} \ \ \forall \ k  \in \ \ \{1,\dots,n\}
\end{align}

Hence from 3.7 and 3.8, we will get that $\lambda(k) \geq \frac{1}{n^2} \ \ \forall k \in \{1,\dots,n\}$

Hence we must get that for any bistochastic matrix,  $M(B) = \prod_{k=1}^{n} \lambda(k) \geq \frac{1}{n^{2n}}$

and note that the equality $M(B) = \frac{1}{n^{2n}}$ holds if and only if all the inequalities in 3.7 and 3.8 are equalities and by the conditions of equality in cauchzy schwarz inequality, we must have that all entries should be equal and since each row and column sum upto one, we must have $B_{ij} = \frac{1}{n} \ \ \forall \ i,j \in \{1,\dots,n\}.$. Hence the minimum is achieved only at the uniform matrix. Hence we have established our result. 

\end{proof}

Next we state and prove similar results involving $*$- positive matrix, $*$- permutation matrix and $*$-uniform matrix.

 we again define similar quanties for $*$- positive matrix. For a $*$- positive matrix $A$ of order $n$  let $A_{1.}, \dots A_{n.}$ denote the rows of $A$ and let $A_{.1}, \dots A_{.n}$ denote the columns of $A$. For each $k$-th row and $k$-th column, we define \textit{$k$-th Matching factor} of $A$ as follows

$$\lambda^{*}(k) = \frac{\|A_{k.}\|_{2}^{2} \|A_{.k}\|_{2}^{2}}{(\sum_{i=1}^{n} A_{ki})^2 (\sum_{j=1}^{n} A_{jk})^2}  \ \ \ \ k = 1, \dots, n$$ 

Intuitively if $\lambda(k)$ is near one, then one of the entry is nearly one and rest are near zero, where as if $\lambda(k)$ in small, then many entries can be non-zero. 

For a $*$-positive matrix, we denote the \textit{Matching factor} of $A$ as $M^{*}(A)$ and  define it as follows

$$M^{*}(A) = \prod_{k=1}^{n} \lambda^{*}(k)$$

Note that above definitions of Matching factor reduces to the same definition of Matching factor for bistochastic matrices. Next we state our characterization theorem for $*$-positive matrices.

\begin{theorem}
 for any $*$-positive matrix $A$ of order $n$, the matching factor $M^{*}(A)$ satisfies the inequalities

 $$\frac{1}{n^{2n}} \leq M^{*}(A) \leq 1$$ 

 and the lower bound and upper bounds are achieved 
 \begin{equation}\label{upper bound}
     \max_{A \in \mathcal{A}^{*}_n} M^{*}(A) = 1  \ \ \ \argmax M(A) = \mathcal{P}^{*}_n
 \end{equation} 
  and 
  \begin{equation}\label{lower bound}
     \min_{A \in \mathcal{A}^{*}_n} M^{*}(A) = \frac{1}{n^{2n}}  \ \ \ \argmin M^{*}(A) = \mathcal{U}^{*}_n\
 \end{equation} 
\end{theorem}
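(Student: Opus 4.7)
The plan is to reduce the proof to the same argument as for bistochastic matrices by normalizing each row and column to a probability vector. For a $*$-positive matrix $A$ of order $n$, let $r_k = \sum_{i=1}^n A_{ki}$ and $c_k = \sum_{j=1}^n A_{jk}$; by $*$-positivity both are strictly positive. The definition of $\lambda^*(k)$ then factors as
\[
\lambda^*(k) \;=\; \frac{\|A_{k.}\|_2^2}{r_k^2} \cdot \frac{\|A_{.k}\|_2^2}{c_k^2},
\]
where each factor is the squared Euclidean norm of a non-negative probability vector (the $k$-th row divided by $r_k$, and the $k$-th column divided by $c_k$).

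Next I would record the two-sided bound for an arbitrary non-negative probability vector $p=(p_1,\ldots,p_n)$. Cauchy--Schwarz gives $\sum_i p_i^2 \geq \frac{1}{n}(\sum_i p_i)^2 = \frac{1}{n}$, with equality iff $p_i=1/n$ for all $i$; and $p_i\in[0,1]$ with $\sum_i p_i=1$ gives $\sum_i p_i^2 \leq \sum_i p_i = 1$, with equality iff exactly one entry equals $1$ (this is Lemma \ref{lemma} applied to $p$). Applying both bounds to the row- and column-normalized vectors yields $\lambda^*(k)\in[1/n^2,1]$ for every $k$, and multiplying gives $M^*(A)\in[1/n^{2n},1]$.

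The equality cases then follow directly. If $M^*(A)=1$, every $\lambda^*(k)$ must equal $1$, forcing both the $k$-th normalized row and the $k$-th normalized column to be standard basis vectors; hence each row and each column of $A$ has exactly one positive entry, i.e.\ $A\in\mathcal{P}_n^*$. Conversely any $A\in\mathcal{P}_n^*$ clearly achieves $M^*(A)=1$. If $M^*(A)=1/n^{2n}$, every $\lambda^*(k)=1/n^2$, forcing $A_{ki}=r_k/n$ for all $i$ and $A_{jk}=c_k/n$ for all $j$. Equating the two expressions for $A_{kj}$ gives $r_k=c_j$ for all $k,j$, so all row and column sums coincide and all entries of $A$ are equal, i.e.\ $A\in\mathcal{U}_n^*$; the converse is immediate.

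I do not expect a serious obstacle: the normalization by $(r_k,c_k)$ reproduces the bistochastic setting inside each row/column factor, so the Cauchy--Schwarz and Lemma \ref{lemma} arguments go through verbatim. The only mildly delicate point is the last step, converting \emph{rows each constant} and \emph{columns each constant} into \emph{all entries equal}, but this is immediate from comparing the row- and column-constant expressions for a single entry $A_{kj}$.
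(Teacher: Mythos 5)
Your proposal is correct and follows essentially the same route as the paper: the normalization of each row and column by its sum is just a repackaging of the paper's direct application of $\sum_k a_k^2 \le (\sum_k a_k)^2$ (with Lemma \ref{lemma} for the equality case) and of the Cauchy--Schwarz lower bound. If anything, your final step for the minimum is more careful than the paper's, which asserts ``all entries should be equal'' directly from Cauchy--Schwarz, whereas you correctly note that equality only forces each row and each column to be constant separately and then derive $r_k = c_j$ by comparing the two expressions for a single entry $A_{kj}$.
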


\begin{proof}

First we prove 3.9. Note that if $A$ is $*$-permutation matrix, then by definition each row and column of $A$ will have exactly non-zero entry and rest zero, hence we will get that

\begin{align*}
    \|A_{.k}\|_{2}^2 = \sum_{i=1}^{n} A_{ik}^2 & = (\sum_{i=1}^{n} A_{ik})^2  \ \ \forall \ \ k \in \{1,\dots,n\} \\
    \|A_{k.}\|_{2}^2 =  \sum_{i=1}^{n} A_{ki}^2  & = (\sum_{i=1}^{n} A_{ki})^2 \ \ \forall \ \ k \in \{1,\dots,n\}
\end{align*}
Hence by combining, we get 
$$ \lambda^{*}(k) = \frac{\|A_{.k}\|_{2}^2 \|A_{k.}\|_{2}^2}{(\sum_{i=1}^{n} A_{ki})^2 (\sum_{j=1}^{n} A_{jk})^2} =  1\ \ \ \forall \ \ k \in \{1,\dots,n\}$$.

Hence we get $M^{*}(A) = 1$.

Now to prove the other direction. suppose we have $M^{*}(A) = 1$.

By non-negativity of each entries of $A$, and that each row and column has atleast one positive entry, and from the trivial observation that for any non-negative numbers $a_1, \dots, a_n$ we must have $\sum_{k=1}^{n} a^{2}_k \leq (\sum_{k=1}^{n} a_{k})^2$, so we get

\begin{equation}
    \frac{\|A_{.k}\|_{2}^2 \|A_{k.}\|_{2}^2}{(\sum_{i=1}^{n} A_{ki})^2 (\sum_{j=1}^{n} A_{jk})^2} \leq  1\ \ \ \forall \ \ k \in \{1,\dots,n\}
\end{equation}

Hence for any $*$- positive matrix, we must have that $\lambda^{*}(k) \leq 1 \ \forall k \  \in  \{1,\dots,n\} $, which implies $M^{*}(A) \leq 1$. 

Now from our assumption we have $M^{*}(A) = 1$. Hence we must have $\lambda^{*}(k) = 1 \ \forall k \in \ \ \{1,\dots,n\}$ Which in turn implies that all inequalites in 3.11 must be equalities, So we will have
\begin{align*}
     \sum_{i=1}^{n} A_{ik}^2 & = (\sum_{i=1}^{n} A_{ik})^2  \ \ \forall \ \ k \in \{1,\dots,n\} \\
     \sum_{i=1}^{n} A_{ki}^2  & = (\sum_{i=1}^{n} A_{ki})^2 \ \ \forall \ \ k \in \{1,\dots,n\}
\end{align*}

Now we state a slight modification of lemma \ref{lemma} below

\textbf{Modification of lemma \ref{lemma}:} 
if $a_1,\dots,a_n$ are $n$ non-negative real numbers, then only exactly one of them is non-zero if and only if 
    
\begin{equation}\label{0.2}
        \sum_{k=1}^{n} a_{k}^2 = (\sum_{k=1}^{n} a_k) ^ 2  > 0
\end{equation}

The proof of the above modification is a trivial modification of the proof of lemma \ref{lemma}. So we get each row and each column of $A$ must have exactly one non-zero entry  and rest are zero which in turn implies that $A$ must be a $*$- permutation matrix, hence we are done. 

Now we prove 3.10. The proof again follows from  Cauchy-Schwarz inequality. 

Note that by Cauchy-Schwarz inequality

\begin{equation}
    \frac{\|A_{.k}\|_{2}^2 \|A_{k.}\|_{2}^2}{(\sum_{i=1}^{n} A_{ki})^2 (\sum_{j=1}^{n} A_{jk})^2} \geq \frac{1}{n^2}\ \ \ \forall \ \ k \in \{1,\dots,n\}
\end{equation}
Hence from 3.13, we will get that $\lambda^{*}(k) \geq \frac{1}{n^2} \ \ \forall k \in \{1,\dots,n\}$

Hence we must get that for any $*$- positive matrix,  $M^{*}(A) = \prod_{k=1}^{n} \lambda^{*}(k) \geq \frac{1}{n^{2n}}$

and note that the equality $M^{*}(A) = \frac{1}{n^{2n}}$ holds if and only if all the inequalities in 3.13 are equalities and by the conditions of equality in cauchzy schwarz inequality, we must have that all entries should be equal Hence the minimum is achieved only for the $*$-uniform matrix. Hence we have established our result. 

\end{proof}

\section{Conclusion and remarks} \label{sec4}
We defined a scalar matching factor on a bistochastic matrix and showed that the lower and upper bounds characterize the permutation matrices and uniform matrices. We also defined similar quantities for a larger class of matrices, known as $*$-positive matrices, and demonstrated similar maximization and minimization properties. The scalar matching factor on bistochastic matrices can be of independent interest in optimization, matching, and the theory of Markov chains. One possible application could be checking or validating whether a large bistochastic matrix is a permutation matrix by simply computing our matching factor rather than checking row by row. This matching quantity seems to provide a notion of how far any bistochastic matrix is from being a permutation matrix, though more thought will be needed to correctly formulate such notions.
\section{Acknowledgements}

The Author acknowledge the summer support from the Department of Statistics and Data science at Cornell university. Special thanks to Suraj Dash of UT Austin for taking part in discussion of some of the ideas presented in the paper.
\bibliographystyle{acm}
\bibliography{sample}

\end{document}